 \newtheorem{thm}{Theorem}[section]
 \newtheorem{lem}[thm]{Lemma}
 \theoremstyle{definition}
 \theoremstyle{remark}
 \numberwithin{equation}{section}
\def\r{\mathbb R}
\def\h{\mathbb H}
\def\v{\vec{v}}
\def\e{\mathbf e}
\def\R{\mathbb R}
\def\n{\textbf{n}}
\def\a{\textbf{a}}
\def\s{\mathbb S}
\def\M{\mathbb M}
\begin{document}

%
%
%
%
%
%
%
%
%

\title[Singular minimal surfaces with constant  curvature]
 {Singular minimal surfaces with constant  curvature}

 
\author{Rafael L\'opez}
\address{Department of Geometry and Topology. University of Granada. 18071  Granada, Spain}
\email{rcamino@ugr.es}
\subjclass{53A04, 53B25}
\keywords{singular minimal surface, Gauss curvature, principal curvatures, Codazzi equations}

\begin{abstract}
We prove that singular minimal surfaces with  constant Gauss curvature are planes, spheres and cylindrical surfaces. We also  classify all singular minimal surfaces with a constant principal curvature and  singular minimal surfaces with constant mean curvature.
\end{abstract}

\maketitle

\section{Introduction and statement of the results}


 Let $\vec{a}\in\r^3$ be a unit vector in Euclidean space $\r^3$ and $\alpha\in\r$. Let $\Sigma$ be an oriented surface and $\Phi\colon\Sigma\to\r^3$ be an isometric immersion of $\Sigma$ in $\r^3$. The surface $\Sigma$ is said to be an {\it $\alpha$-singular  minimal surface} if its mean curvature $H$ satisfies
 \begin{equation}\label{eq1}
 H=\alpha\frac{\langle N,\vec{a}\rangle}{\langle\Phi,\vec{a}\rangle},
 \end{equation}
 where $N$ is the unit normal vector of $\Sigma$. We are implicitly assuming that  $\Phi(\Sigma)$ is included in one of the two halfspaces determined by the vector plane $\Pi=\{p\in\r^3\colon\langle  p,\vec{a}\rangle=0\}$. Singular minimal surfaces are   critical points of the energy functional 
 $$\Sigma\mapsto\int_\Sigma \langle\Phi(p),\vec{a}\rangle^\alpha\, d\Sigma,$$
 where $d\Sigma$ is the area element of $\Sigma$. If $\alpha=0$, this energy is simply the area functional and the critical points are minimal surfaces. From now, we discard the case $\alpha=0$. The case $\alpha=1$ extends in dimension two the notion of catenary: see \cite{bht,dh,ni}. The study of the singular minimal surface equation can be viewed in \cite{bd,di1,di2,lo3,lo4}. For  geometric aspects of singular minimal surfaces, see \cite{dg,dl,lo1,lo2}.  

Among the examples of $\alpha$-singular minimal surfaces, we focus on isoparametric surfaces, that is,  planes, spheres and circular cylinders. 
\begin{enumerate}
\item Planes parallel to $\vec{a}$ are $\alpha$-singular minimal surfaces for all $\alpha\in\r$. 
\item Spheres are $\alpha$-singular minimal surfaces only if $\alpha=-2$. In such a case, the center of the sphere lies in the vector plane $\Pi$. These surfaces also represent minimal surfaces in hyperbolic space $\mathbb{H}^3$ when $\h^3$ is viewed in the  upper halfspace model   $\r^3_+=\{p\in\r^3\colon\langle p,\vec{a}\rangle>0\}$. 
\item Circular cylinders are   $\alpha$-singular minimal surfaces only if $\alpha=-1$. In such a case, the rotational axis lies in the vector plane $\Pi$. 
\end{enumerate}

Other examples of $\alpha$-singular minimal surfaces can be found in the family of  cylindrical surfaces, that is, ruled surfaces whose rulings are parallel straight-lines. If the cylindrical surface is parametrized by $\Psi(s,t)=\gamma(s)+t\vec{v}$, $|\vec{v}|=1$, and if $\Sigma$ satisfies \eqref{eq1}, then either $\Sigma$ is a plane parallel to $\vec{a}$ or $\langle \vec{v},\vec{a}\rangle=0$. This implies that the generating curve $\gamma$ lies contained in a plane parallel to $\vec{a}$. In such a case, Eq. \eqref{eq1} writes in terms of the curvature $\kappa$ of $\gamma$ as
 \begin{equation}\label{eq2}
 \kappa= \alpha \frac{\langle{\bf n},\vec{a}\rangle}{\langle\gamma,\vec{a}\rangle},
 \end{equation}
where ${\bf n}$ is the unit normal of $\gamma$.  These surfaces have zero   Gauss curvature. Cylindrical singular minimal surfaces have been classified in \cite{dl,lo1}. If $\alpha=-1$, cylindrical singular minimal surfaces are circular cylinders. In this paper, we will refer these surfaces simply by cylindrical singular minimal surfaces.

Planes parallel to $\vec{a}$,  spheres centered at $\Pi$ and the cylindrical singular minimal surfaces  have   constant Gauss curvature $K$. The purpose of this paper is to prove the converse and characterize these surfaces are the only singular minimal surfaces with constant Gauss curvature.

\begin{thm} \label{t1}
Planes parallel to $\vec{a}$,  spheres centered at $\Pi$ and the cylindrical singular minimal surfaces are the only $\alpha$-singular minimal surfaces with   constant Gauss curvature. 
\end{thm}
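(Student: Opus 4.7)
The plan is to separate the analysis according to whether $\Sigma$ is totally umbilical, and, in the non-umbilical case, to work in principal-curvature coordinates and combine the Codazzi equations with \eqref{eq1} to force one of the principal curvatures to be constant.

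First I would dispose of the umbilical case. If $k_1\equiv k_2=k$, then $K=k^2$ is automatically constant and $\Sigma$ is an open piece of a plane or a sphere. The list of isoparametric examples in the introduction then pins down which of these satisfy \eqref{eq1}: a plane must be parallel to $\vec{a}$ (so that $H=0=\langle N,\vec{a}\rangle$), and a sphere must have $\alpha=-2$ with center on $\Pi$.

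For the non-umbilical case, let $U\subset\Sigma$ be an open set on which $k_1>k_2$, and choose principal-curvature coordinates $(u,v)$ with first fundamental form $ds^2=E\,du^2+G\,dv^2$. Writing $\phi=\langle\Phi,\vec{a}\rangle$ and $\psi=\langle N,\vec{a}\rangle$, equation \eqref{eq1} reads $(k_1+k_2)\phi=2\alpha\psi$. I would differentiate this identity along $\partial_u$ and $\partial_v$, using Rodrigues' formulas $N_u=-k_1\Phi_u$, $N_v=-k_2\Phi_v$ to evaluate $\phi_u,\phi_v,\psi_u,\psi_v$, and combine the result with the Codazzi equations
\begin{equation*}
(k_1)_v=\frac{E_v}{2E}(k_2-k_1),\qquad (k_2)_u=\frac{G_u}{2G}(k_1-k_2),
\end{equation*}
together with the relation $k_1k_2=K$ (constant). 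When $K\neq 0$, differentiating $k_1k_2=K$ expresses all derivatives of $k_2$ through those of $k_1$, so the system becomes algebraic in $k_1, E, G, \phi, \psi$ and their first derivatives.

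The main obstacle is closing this overdetermined system. I expect that cross-compatibility of the differentiated versions of $(k_1+k_2)\phi=2\alpha\psi$, together with the Gauss equation for constant $K$, forces $(k_1)_u=(k_1)_v=0$ on $U$; Theorem \ref{t1} then reduces to the classification of singular minimal surfaces with a constant principal curvature announced in the abstract, whose only representatives with constant $K\neq 0$ are spheres centered at $\Pi$. The subcase $K=0$ must be treated separately, since then one principal curvature vanishes and the substitution $k_2=K/k_1$ is not available: in this case $\Sigma$ is locally developable, and a direct argument exploiting \eqref{eq1} along a ruling forces the rulings to be parallel to a fixed direction $\vec{v}$ perpendicular to $\vec{a}$, thereby ruling out cones and tangent developables and reducing \eqref{eq1} to the planar equation \eqref{eq2} for the generating curve; hence $\Sigma$ is a cylindrical singular minimal surface.
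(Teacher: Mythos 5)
Your overall architecture matches the paper's: dispose of the umbilical/isoparametric pieces, treat $K=0$ via the developable/ruled structure and reduce to cylindrical surfaces, and in the non-umbilical case with $K\neq 0$ set $\kappa_2=K/\kappa_1$ and try to close an overdetermined system built from the Codazzi equations, the constant-$K$ Gauss equation and the differentiated singular minimal relation. The $K=0$ branch is sound in outline (the paper does the same, invoking Hartman--Nirenberg for the local ruled structure and the classification of ruled singular minimal surfaces in \cite{ay}).

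The genuine gap is the sentence ``I expect that cross-compatibility \dots forces $(k_1)_u=(k_1)_v=0$ on $U$.'' That expectation is the entire content of the hard case, and nothing in your proposal establishes it. In the paper this step occupies most of Section \ref{s3}: from the last two equations of \eqref{d3} one solves for the tangential components $\gamma,\mu$ of $\vec{a}$ in terms of $e_1(\kappa_1)$, $e_2(\kappa_1)$ and $\langle\Phi,\vec{a}\rangle$; cross-differentiating and comparing with the remaining equations of \eqref{d3} yields explicit formulas for $e_{11}(\kappa_1)$, $e_{12}(\kappa_1)$, $e_{22}(\kappa_1)$; substituting into \eqref{bb} produces a relation $P_1e_1(\kappa_1)^2+Q_1e_2(\kappa_1)^2+R_1=0$ with coefficients rational in $\kappa_1$, and one further differentiation gives a second such relation. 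The argument then splits according to whether the determinant $P_1Q_2-P_2Q_1=(\alpha^2-4)\kappa_1(\kappa_1^2-c)^3$ vanishes, i.e.\ whether $\alpha=\pm 2$ (note $\alpha=-2$ is precisely the spherical exponent, so this is not a removable technicality), and the subcase $e_1(\kappa_1)\equiv 0$ needs separate treatment; each branch terminates in a nonzero polynomial identity in $\kappa_1$ with constant coefficients, forcing $\kappa_1$ constant. Until you actually carry out this elimination, or give a structural reason why the system must be inconsistent, the non-umbilical $K\neq 0$ case is unproved. A minor further point: reducing to the classification of surfaces with a constant principal curvature is safe here only because, with $K\neq 0$ constant, one constant principal curvature forces the other to be constant, so you land directly on isoparametric surfaces; you should argue that directly rather than appeal to the separate theorem, whose proof in the paper itself relies on Theorem \ref{t1} in the flat case.
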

This result will proved in Sect. \ref{s3}.  We also study those singular minimal surfaces with a constant principal curvature.  In Sect. \ref{s5}, we prove the following classification. 

\begin{thm} \label{t22} Planes parallel to $\vec{a}$, spheres centered at $\Pi$ and cylindrical singular minimal surfaces      are the only  singular minimal surfaces with a constant principal curvature.   
\end{thm}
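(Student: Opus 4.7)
The plan is to split the argument by whether the umbilic locus has non-empty interior. On an umbilical open piece the surface is (part of) a plane or a sphere: a plane has $H\equiv 0$, so \eqref{eq1} reduces to $\langle N,\vec{a}\rangle=0$, forcing the plane to be parallel to $\vec{a}$; for a sphere of constant mean curvature, \eqref{eq1} becomes an identity in $\langle\Phi,\vec{a}\rangle$ that pins down $\alpha=-2$ and forces the center to lie in $\Pi$. Since the singular minimal surface equation is real-analytic, these conclusions propagate from an open umbilical piece to the whole connected component.

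Assume therefore that the non-umbilical open set $U=\{k_1\neq k_2\}$ is non-empty, and let $k_1$ be the constant principal curvature. I would work in line of curvature coordinates $(u,v)$ with $\partial_u$ in the $k_1$-eigendirection. The Codazzi equation
\begin{equation*}
\partial_v k_1=\frac{E_v}{2E}(k_2-k_1)
\end{equation*}
together with $k_2\neq k_1$ forces $E_v\equiv 0$, and after a reparametrization of $u$ one can assume $E\equiv 1$. With $E=1$ and $E_v=0$ the two relevant Christoffel symbols vanish and $\Phi_{uu}=k_1\,N$; combined with the Weingarten identity $N_u=-k_1\,\Phi_u$, a short Darboux frame computation shows that the binormal of each $u$-curve is constant and its Frenet curvature equals $|k_1|$. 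Hence each $k_1$-line of curvature is either a straight line ($k_1=0$) or a circle of radius $r=1/|k_1|$ ($k_1\neq 0$).

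When $k_1=0$ the Gauss curvature $K=k_1 k_2\equiv 0$ is constant and Theorem \ref{t1} immediately yields a plane parallel to $\vec{a}$ or a cylindrical singular minimal surface. The heart of the argument is the remaining case $k_1\neq 0$, in which $U$ admits the explicit parametrization
\begin{equation*}
\Phi(u,v)=c(v)+r\bigl(\cos(u/r)\,\e_1(v)+\sin(u/r)\,\e_2(v)\bigr),
\end{equation*}
with unit normal $N(u,v)=-\cos(u/r)\,\e_1(v)-\sin(u/r)\,\e_2(v)$ and $\{\e_1(v),\e_2(v)\}$ an orthonormal $v$-dependent pair in the plane of the circle. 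I would substitute these expressions into \eqref{eq1}, expand the result as a trigonometric polynomial in $\cos(u/r)$ and $\sin(u/r)$, and demand that each coefficient vanish for every $v$; imposing orthogonality of $\Phi_u$ and $\Phi_v$ to extract the coupling between $c(v)$ and the frame derivatives then produces a rigid ODE system in $c(v),\e_1(v),\e_2(v)$. The main obstacle is to extract from this system the conclusion that the moving frame is constant and the spine $c(v)$ collapses either to a single point of $\Pi$, giving a sphere centered on $\Pi$ with $\alpha=-2$, or to a straight line in $\Pi$ perpendicular to the common plane of the circles, giving a circular cylinder with $\alpha=-1$. Together with the umbilical and flat cases, this exhausts the possibilities listed in Theorem \ref{t22}.
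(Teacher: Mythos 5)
Your route is genuinely different from the paper's. The paper never invokes the tube/canal structure: after disposing of the cases $\kappa_2=0$ (via $K=0$ and Theorem \ref{t1}) and $\kappa_1$ also constant (isoparametric), it assumes $\kappa_2=c\neq0$ and $\kappa_1$ non-constant and grinds out a contradiction from the Codazzi--Gauss machinery of Section \ref{s2}: it solves the last two equations of \eqref{d3} for $\gamma$ and $\mu$, differentiates, substitutes back into \eqref{bb}, and after two more differentiations lands on a polynomial identity in $\kappa_1$ with constant coefficients, forcing $\kappa_1$ constant. Your preliminary reductions are sound and essentially agree with the paper's: the umbilic/isoparametric discussion, the Codazzi computation $\partial_v k_1=\frac{E_v}{2E}(k_2-k_1)$ forcing $E_v=0$, the conclusion $\Phi_{uu}=k_1N$, and hence that the $k_1$-lines of curvature are lines or circles of radius $1/|k_1|$, are all correct, and the $k_1=0$ case is correctly reduced to Theorem \ref{t1}.

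The problem is that in the only remaining case, $k_1\equiv\mathrm{const}\neq0$ with $k_2$ non-constant, you stop exactly where the proof begins. You write down the tube parametrization and then say you ``would substitute \ldots and demand that each coefficient vanish,'' explicitly calling the extraction of the conclusion ``the main obstacle.'' That obstacle \emph{is} the theorem: everything before it is standard surface theory, and the entire content of Theorem \ref{t22} beyond Theorem \ref{t1} lives in showing that no genuine tube (spine with nonzero curvature, or non-constant $k_2$) satisfies \eqref{eq1}. Note also that the expected outcome of this case is a flat contradiction, not ``sphere or cylinder'': both of those are isoparametric and were already absorbed in your earlier cases, so your ODE system must be shown to have \emph{no} admissible solutions, which is a nonexistence statement you have not begun to prove. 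Two technical warnings if you do carry it out: the second principal curvature of a tube involves the factor $1-r\kappa(v)\cos(u/r)$ in a denominator, so the identity obtained from \eqref{eq1} is not a trigonometric polynomial until you clear denominators, and the Fourier analysis in $u/r$ must be done for every fixed $v$ with $v$-dependent coefficients, after which you still need to integrate the resulting ODEs for $c(v)$ and the frame. As it stands the proposal is an accurate reduction plus a declaration of intent, not a proof.
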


Finally we consider  singular minimal surfaces with constant mean curvature.

\begin{thm} \label{t3}  Planes parallel to $\vec{a}$ and spheres centered at $\Pi$ are the only   singular minimal surfaces with constant mean curvature.   
\end{thm}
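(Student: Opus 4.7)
The plan is to split on whether $H$ is identically zero or a nonzero constant, and in the nonzero case to prove that both principal curvatures of $\Sigma$ are constant, so that the classical isoparametric classification in $\mathbb{R}^3$ applies.

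If $H\equiv 0$, the equation \eqref{eq1} together with $\alpha\ne 0$ forces $\langle N,\vec{a}\rangle\equiv 0$; equivalently, the constant vector $\vec{a}$ is everywhere tangent to $\Sigma$. Because $\vec{a}$ is parallel in $\mathbb{R}^3$, its intrinsic covariant derivative vanishes, so $\vec{a}$ is an intrinsically parallel unit tangent field on $\Sigma$; hence $\Sigma$ is flat and ruled by lines parallel to $\vec{a}$. The discussion preceding Theorem~\ref{t1} then closes this case: a cylindrical singular minimal surface whose rulings are not orthogonal to $\vec{a}$ must be a plane parallel to $\vec{a}$.

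When $H$ is a nonzero constant, I would differentiate \eqref{eq1} along an arbitrary tangent direction $X$; the constancy of $H$ collapses the derivative to
$$H\,\langle X,\vec{a}\rangle \;=\; -\alpha\,\langle AX,\vec{a}\rangle,$$
where $A=-dN$ is the shape operator. Using the self-adjointness of $A$ and the splitting $\vec{a}=\vec{a}^{T}+\langle N,\vec{a}\rangle N$ along $\Sigma$, this is equivalent to the pointwise eigenvalue identity $A\vec{a}^{T}=-(H/\alpha)\,\vec{a}^{T}$. The tangential field $\vec{a}^{T}$ cannot vanish on an open set, because that would make $\vec{a}$ normal to $\Sigma$ there, forcing $\Sigma$ to be a piece of a plane perpendicular to $\vec{a}$ and hence $H=0$. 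Thus, on a dense open subset, $\vec{a}^{T}$ is a principal direction with constant principal curvature $\kappa_{1}=-H/\alpha$. Since $H=\kappa_{1}+\kappa_{2}$ in the paper's convention, the other principal curvature $\kappa_{2}=H(\alpha+1)/\alpha$ is likewise constant, and by continuity $\Sigma$ has two constant principal curvatures everywhere.

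The classical classification of surfaces with constant principal curvatures in $\mathbb{R}^{3}$ (or, equivalently, the statement of Theorem~\ref{t22}) then confines $\Sigma$ to an open piece of a plane, a sphere, or a circular cylinder. The plane option reproduces $H=0$ and is ruled out. The sphere option forces $\kappa_{1}=\kappa_{2}$, which combined with $\kappa_{1}=-H/\alpha$ and $\kappa_{2}=H(\alpha+1)/\alpha$ gives $\alpha=-2$; the introduction then requires the center to lie on $\Pi$, matching the conclusion. The circular cylinder option corresponds to $\alpha=-1$ with axis on $\Pi$ and is the main obstacle of the argument: it must be excluded by exploiting that such a cylinder straddles $\Pi$ and cannot be contained in a single open halfspace $\{\langle\cdot,\vec{a}\rangle>0\}$ or $\{\langle\cdot,\vec{a}\rangle<0\}$, in conflict with the standing hypothesis under \eqref{eq1}.
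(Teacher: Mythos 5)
Your argument follows essentially the same mechanism as the paper's: differentiating \eqref{eq1} under the assumption that $H$ is constant. The paper carries this out in a principal frame and obtains $(H+\alpha\kappa_i)\langle e_i,\vec{a}\rangle=0$, while you phrase the same computation invariantly as $S\vec{a}^{\top}=-(H/\alpha)\vec{a}^{\top}$; your version is slightly cleaner, since it yields the constancy of both principal curvatures in one stroke, without the paper's preliminary split into ``some principal curvature constant'' versus ``neither constant''. Your treatment of the case $H\equiv 0$ is also correct.

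The genuine gap is the last step, the exclusion of the circular cylinder. The standing hypothesis under \eqref{eq1} is only that the \emph{image of the immersion} lies in one open halfspace determined by $\Pi$; it does not force the surface to be a complete cylinder. An open half-cylinder of radius $r$ with axis contained in $\Pi$ lies entirely in $\{p\colon\langle p,\vec{a}\rangle>0\}$, satisfies \eqref{eq1} with $\alpha=-1$ (exactly as the paper records in its introduction), and has constant mean curvature $H=\pm 1/r\neq 0$. So the ``straddles $\Pi$'' argument does not eliminate this case, and no local argument can: these half-cylinders genuinely realize the isoparametric alternative you are trying to rule out. You have in fact put your finger on the one step that the paper's own proof also leaves open --- it passes from ``isoparametric'' directly to ``plane or sphere'', silently omitting the cylinder from the trichotomy. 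As stated, the theorem should either admit circular cylinders (equivalently, cylindrical singular minimal surfaces with $\alpha=-1$) in its conclusion or carry an extra hypothesis excluding them; your proof is correct up to, but not including, that exclusion.
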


\section{Preliminaries}\label{s2}

In this section we describe parametrizations of a surface by lines of curvature. This will be done in open sets of a surface free of umbilic points. Let $\Sigma$ be an orientable surface immersed in $\r^3$. Denote by  $\nabla$ and $\overline{\nabla}$ be the Levi-Civita connections of $\Sigma$ and
$\mathbb{R}^3$, respectively.  Denote by $N$ the unit normal vector field of $\Sigma$. Let $\mathfrak{X}(\Sigma)$ be the space of tangent vector fields of $\Sigma$. The  Gauss and Weingarten formulas are, respectively, 
\begin{equation}\label{gn}  
\begin{split}
    \overline{\nabla}_XY&=\nabla_XY+h(X,Y), \\
    \overline{\nabla}_XN&=-SX
    \end{split}
\end{equation}
for all $X,Y\in\mathfrak{X}(\Sigma)$, where $h\colon T\Sigma\times T\Sigma\to T^\perp\Sigma$ and  $S\colon T\Sigma\to  T\Sigma$ are the second fundamental form and the shape
operator, respectively. The  Gauss
equation and Codazzi equations are 
\begin{equation*}
\begin{split}
    \langle R(X,Y)W,Z\rangle&=\langle h(Y,W),h(X,Z)\rangle-\langle
    h(X,W),h(Y,Z)\rangle\\
    (\nabla_XS)Y&=(\nabla_YS)X,
    \end{split}
\end{equation*}
where $R$ is the curvature tensor of $\nabla$. The   mean curvature $H$ and the Gauss curvature $K$ of $\Sigma$ are defined by  
\[ H= \kappa_1+\kappa_2 \quad K=\kappa_1\kappa_2,\]
where   $\kappa_1$ and $\kappa_2$ are the principal curvatures of $\Sigma$. In an open set $\Omega\subset\Sigma$ of non-umbilic points, consider a local
orthonormal frame $\{e_1,e_2\}$ given by  coordinates of  lines of curvature. The expression of the shape operator $S$ is 
\begin{equation}\label{b0}
Se_1 =\kappa_1 e_1,\qquad Se_2 =\kappa_2 e_2.
    \end{equation}
Since $\{e_1,e_2\}$ is an orthonormal frame, if $X\in\mathfrak{X}(\Sigma)$ then
\begin{equation*}
\nabla_X e_1=\omega(X)e_2\quad  \nabla_X e_2=-\omega(X)e_1,
\end{equation*}
where $\omega(X)=\langle\nabla_X e_1,e_2\rangle=-\langle\nabla_X e_2,e_1\rangle$. Then we have 
\begin{equation}\label{eiej}
\left\{
\begin{split}
&\nabla_{e_1}e_1=\omega(e_1)e_2,\quad \nabla_{e_1}e_2=-\omega(e_1)e_1\\
&\nabla_{e_2}e_1=\omega(e_2)e_2,\quad \nabla_{e_2}e_2=-\omega(e_2)e_1.
\end{split}\right.
\end{equation}
Using the Codazzi equations, we have  
\begin{equation}\label{b1}
    \left\{
    \begin{array}{ll}
        e_{2}(\kappa_1)=(\kappa_1-\kappa_2)\omega(e_1)\\
        e_{1}(\kappa_2)=(\kappa_1-\kappa_2)\omega(e_2),
    \end{array}
    \right.
\end{equation}
where, as usually, $e_i(f)$ denotes the derivative of a smooth function $f$ along the vector $e_i$, $i=1,2$.  Notice that $\kappa_1-\kappa_2\not=0$ in $\Omega$.
Thanks to  \eqref{b1}, in the set $\Omega$, the  Gauss equation  gives the following expression of the Gauss curvature $K$,  
\begin{equation}\label{bb}
   K=\kappa_1\kappa_2=-e_1\left(\frac{e_1( \kappa_2 )}{\kappa_1-\kappa_2} \right)+e_2\left(\frac{e_2( \kappa_1 )}{\kappa_1 -\kappa_2}\right)
   -\frac{e_1( \kappa_2)^2+ e_2( \kappa_1 )^2}{(\kappa_1 -\kappa_2)^2}.
\end{equation}

 Denote by $\Phi\colon\Sigma\to\r^3$ the immersion of  a singular minimal surface. Let  decompose the  vector $\vec{a}$ in its tangent part $\vec{a}^\top$ and normal part $\vec{a}^\perp$ with respect to $\Sigma$,  
\begin{equation*}
\vec{a}^\top=\gamma e_1+\mu e_2,
\end{equation*}
for some smooth functions $\gamma$ and $\mu$. The functions $\gamma$ and $\mu$ are given by 
$$\gamma=\langle \vec{a},e_1\rangle,\quad \mu=\langle\vec{a},e_2\rangle.$$
 The normal part $\vec{a}^\perp$  is   $\langle N,\vec{a}\rangle N$, which by \eqref{eq1} is
$$\vec{a}^{\perp}=\frac{H\langle\Phi,\vec{a}\rangle}{\alpha}N.$$
\begin{lem}\label{l2}
Let $\Omega$ be a subset of non-umbilic points of an $\alpha$-singular minimal surface. Then the functions   $\gamma$ and $\mu$ satisfy the following equations
    \begin{equation}\label{d3}
    \left\{
\begin{split}
            0=&e_{1}(\gamma)-\mu\omega(e_1)-\frac{H\langle\Phi,\vec{a}\rangle}{\alpha}\kappa_1\\
            0=&e_{2}(\gamma)-\mu\omega(e_2) \\
            0=&e_{1}(\mu)+\gamma\omega(e_1)\\
           0= &e_{2}(\mu)+\gamma\omega(e_2)-\frac{H\langle\Phi,\vec{a}\rangle}{\alpha}\kappa_2\\
           0= &e_{1}(\frac{H\langle\Phi,\vec{a}\rangle}{\alpha})+\gamma\kappa_1\\
           0= &e_{2}(\frac{H\langle\Phi,\vec{a}\rangle}{\alpha})+\mu\kappa_2.
        \end{split}\right.
    \end{equation}
\end{lem}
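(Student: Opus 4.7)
The six equations say nothing more than that the vector $\vec{a}$ is constant. My plan is to differentiate the orthonormal decomposition
\[
\vec{a}=\gamma e_1+\mu e_2+\frac{H\langle\Phi,\vec{a}\rangle}{\alpha}N
\]
in the ambient connection $\overline{\nabla}$, expand using the Gauss and Weingarten formulas \eqref{gn}, and then read off the tangential $e_1$, $e_2$ and normal $N$ components separately. Since $\vec{a}$ is a fixed vector of $\r^3$, one has $\overline{\nabla}_{e_i}\vec{a}=0$ for $i=1,2$, so each of these two vector identities produces three scalar equations, yielding the required six relations.

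Concretely, I first compute $\overline{\nabla}_{e_i}e_j$ using \eqref{eiej}: because $\{e_1,e_2\}$ are lines of curvature, the second fundamental form is diagonal by \eqref{b0}, so $h(e_1,e_2)=0$, and the four relations reduce to
\[
\overline{\nabla}_{e_1}e_1=\omega(e_1)e_2+\kappa_1 N,\ \overline{\nabla}_{e_1}e_2=-\omega(e_1)e_1,\ \overline{\nabla}_{e_2}e_1=\omega(e_2)e_2,\ \overline{\nabla}_{e_2}e_2=-\omega(e_2)e_1+\kappa_2 N.
\]
Combined with $\overline{\nabla}_{e_i}N=-\kappa_i e_i$ from Weingarten, the identity $0=\overline{\nabla}_{e_1}\vec{a}$ splits into one equation for each of the three coefficients: the $e_1$-coefficient gives the first equation of \eqref{d3}, the $e_2$-coefficient gives the third, and the $N$-coefficient gives the fifth. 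Likewise $0=\overline{\nabla}_{e_2}\vec{a}$ provides the second, fourth and sixth equations.

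There is essentially no obstacle here: the argument is a direct bookkeeping of terms once the frame identities are in hand. The only point that requires the singular minimal hypothesis is that the normal component of $\vec{a}$ has been replaced by $\tfrac{H\langle\Phi,\vec{a}\rangle}{\alpha}N$ via \eqref{eq1}; this is what allows the coefficient of $N$ in the decomposition to be written as a scalar function on $\Omega$ that can legitimately be differentiated by $e_1$ and $e_2$ as in the last two equations of \eqref{d3}.
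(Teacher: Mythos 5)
Your proof is correct and follows essentially the same route as the paper: both differentiate the decomposition $\vec{a}=\vec{a}^{\top}+\vec{a}^{\perp}$ with $\overline{\nabla}_{e_i}\vec{a}=0$, expand via the Gauss and Weingarten formulas together with \eqref{eiej} and \eqref{b0}, and read off the $e_1$, $e_2$ and $N$ components. The only cosmetic difference is that the paper first writes the identity for a general tangent field $X$ before specializing to $X=e_1,e_2$, while you work with the frame vectors directly.
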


\begin{proof} 
 If $X\in\mathfrak{X}(\Sigma)$, then   $\overline{\nabla}_X\vec{a}=0$ because $\vec{a}$ is constant in $\r^3$. Then the Gauss and
Weingarten formulas imply 
\begin{equation*}
\begin{split}
    0&=\overline{\nabla}_{X}\vec{a}=\overline{\nabla}_{X}(\vec{a}^{\top}+\vec{a}^{\perp})=\overline{\nabla}_X\vec{a}^\top+\overline{\nabla}_X\vec{a}^\perp\\
    &={\nabla}_{X}\vec{a}^{\top}+h(X,\vec{a}^{\top})+X( H\frac{\langle\Phi,\vec{a}\rangle}{\alpha})N- H\frac{\langle \Phi,\vec{a}\rangle}{\alpha} SX 
\end{split}
\end{equation*}
for all   $X\in\mathfrak{X}(\Sigma)$.
Taking the tangent and the normal part in both sides of the above expression, we obtain  
\begin{equation}\label{tn} 
    \left\{
    \begin{array}{ll}
        {\nabla}_{X}\vec{a}^{\top}-H\frac{\langle\Phi,\vec{a}\rangle}{\alpha}SX=0\\
        h(X, \vec{a}^{\top})+X(H\frac{\langle\Phi,\vec{a}\rangle}{\alpha})N=0.
    \end{array}
    \right.
\end{equation}
Equations \eqref{d3} are obtained by substituting  in the two previous identities $X$ by $e_1$ and   $e_2$  and writing in coordinates with respect to $\{e_1,e_2,N\}$. Indeed, if $X=e_1$, and using \eqref{eiej}, the first equation in \eqref{tn} is 
\begin{equation*} 
    \begin{array}{ll}
  0&=  {\nabla}_{e_1}(\gamma e_1+\mu e_2)-H\frac{\langle \Phi,\vec{a}\rangle}{\alpha}\kappa_1 e_1\\
     & =e_1(\gamma)e_1+e_1(\mu)e_2+\gamma \omega(e_1)e_2-\mu\omega(e_1)e_1-H\frac{\langle \Phi,\vec{a}\rangle}{\alpha}\kappa_1 e_1.
         \end{array}
\end{equation*}
Similarly, the second equation in \eqref{tn} is 
$$   0 = h(e_1,\gamma e_1+\mu e_2)+e_1(H\frac{\langle\Phi,\vec{a}\rangle}{\alpha})N =\left(\gamma\kappa_1+e_1\left(\frac{ H\langle\Phi,\vec{a}\rangle}{\alpha}\right)\right)N.$$
This gives the first, third and fifth equations in \eqref{d3}. The other equations of \eqref{d3} are obtained by putting $X=e_2$ in \eqref{tn}.

\end{proof}

\section{Proof of Theorem \ref{t1}}\label{s3}
We know that planes parallel to $\vec{a}$,  spheres centered at $\Pi$ and   cylindrical singular minimal surfaces have  constant Gauss curvature. We prove  the converse. 

The case $K=0$ is studied separately. If $K=0$, the surface is also called developable. It is known that these surfaces can be parametrized locally as ruled surfaces \cite{hn}. On the other hand, ruled singular minimal surfaces were classified in \cite{ay}, proving that these surfaces must be cylindrical surfaces. This proves Thm. \ref{t1} when $K=0$.

Suppose now that $K\not=0$. Let $\Sigma$ be an $\alpha$-singular minimal surface with non-zero constant Gauss curvature $K$, $K=c$. If a principal curvature is constant, and because $K\not=0$, then the other principal curvature is constant. This proves that $\Sigma$ is an isoparametric surface. Since $K\not=0$, then $\Sigma$ is a sphere. This proves the result in this situation. 

Suppose now that the two principal curvatures are not constant and we will arrive to a contradiction.  We write the principal curvature $\kappa_2$ and the mean curvature   $H$ in terms of $K$ and the principal curvature  $\kappa_1$. So, we have  
\begin{equation}\label{gm2}
\kappa_2 =\frac{c}{\kappa_1},\qquad H =\frac{\kappa_1^2+c}{\kappa_1},\qquad 
\kappa_1-\kappa_2=\frac{\kappa_1^2-c}{\kappa_1}.
\end{equation}
We now write Eq. \eqref{bb}. Using that $K=c$ is constant, and taking into account \eqref{gm2}, we have  
 \begin{equation}\label{bb2}
  \begin{split}
    c\frac{(\kappa_1^2-c)^2}{\kappa_1^2}&=\frac{c(\kappa_1^2-c)}{\kappa_1^3}e_{11}(\kappa_1) +\frac{\kappa_1^2-c}{\kappa_1}e_{22}(\kappa_1)\\
   &-\frac{3c}{\kappa_1^2}e_1(\kappa_1)^2-\frac{2\kappa_1^2+c}{\kappa_1^2}e_2(\kappa_1)^2.
   \end{split}
\end{equation}
By using the expressions of $H$ and $\kappa_2$ in \eqref{gm2},  we have
\begin{equation}\label{gm22}
\begin{split}
e_1(\frac{H\langle\Phi,\vec{a}\rangle}{\alpha})&= \frac{\kappa_1^2-c}{\alpha\kappa_1^2}\langle\Phi,\vec{a}\rangle e_1(\kappa_1)+\gamma\frac{\kappa_1^2+c}{\alpha \kappa_1},\\
e_2(\frac{H\langle\Phi,\vec{a}\rangle}{\alpha})&= \frac{\kappa_1^2-c}{\alpha\kappa_1^2}\langle\Phi,\vec{a}\rangle e_2(\kappa_1)+\mu\frac{\kappa_1^2+c}{\alpha \kappa_1}.\\
\end{split}
\end{equation}
Substituting \eqref{gm22} into  the last two equations of \eqref{d3},   we obtain
\begin{equation*}
\begin{split}
 0&=\frac{\kappa_1^2-c}{\alpha\kappa_1^2}\langle\Phi,\vec{a}\rangle e_1(\kappa_1)+\left(\frac{\kappa_1^2+c}{\alpha \kappa_1} +\kappa_1\right)\gamma,\\
0 &= \frac{\kappa_1^2-c}{\alpha\kappa_1^2}\langle\Phi,\vec{a}\rangle e_2(\kappa_1)+\left(\frac{\kappa_1^2+c}{\alpha \kappa_1}+\frac{c}{\kappa_1}\right)\mu.\\
\end{split}
\end{equation*}
From these equations, we get expressions of $\gamma$ and $\mu$,
\begin{equation}\label{gm}
 \gamma=\frac{\langle\Phi,\vec{a}\rangle(c-\kappa_1^2)}{\kappa_1  (c+(1+\alpha)\kappa_1^2)}e_1(\kappa_1),\quad 
\mu=\frac{\langle\Phi,\vec{a}\rangle(c-\kappa_1^2 )}{ \kappa_1 (\kappa_1^2+(1+\alpha)c)}e_2(\kappa_1).
\end{equation}
 
Since Thm. \ref{t1} is local, we can assume that the denominators in \eqref{gm} are not zero in some open set of $\Omega$ because, otherwise, $\kappa_1$ would be a constant function. This also holds in the next computations. We are also using in \eqref{gm} that $c\not=0$. This implies that the parameter $\alpha$ is arbitrary in the denominator of $\gamma$. The hypothesis $c\not=0$ will be employed later in successive simplifications of equations that are multiplied by $c$. In the next and successive   computations, we use a symbolic software as {\sc Mathematica} to manipulate all expressions \cite{wo}. 

We differentiate  $\gamma$ and $\mu$ in \eqref{gm} with respect to $e_1$ and $e_2$ and next, we compare with the corresponding derivatives given in \eqref{d3}.  Notice that  $e_1(\langle\Phi,\vec{a}\rangle)= \gamma$ and  $e_2(\langle\Phi,\vec{a}\rangle)= \mu$. Then the derivatives of the functions \eqref{gm} are
\begin{equation}\label{d4}
\begin{split}
 e_1(\gamma)&=\frac{\langle\Phi,\vec{a}\rangle(c-\kappa_1^2)}{\kappa_1  (c+(1+\alpha)\kappa_1^2)}e_{11}(\kappa_1)+\frac{ \langle \Phi,\vec{a}\rangle(2+\alpha)(\kappa_1^2-3 c)}{(c+(1+\alpha)\kappa_1^2)^2}e_1(\kappa_1)^2\\ 
 e_1(\mu)&=\frac{\langle\Phi,\vec{a}\rangle(c-\kappa_1^2 )}{ \kappa_1 (\kappa_1^2+(1+\alpha)c)}e_{12}(\kappa_1)+\frac{\langle\Phi,\vec{a}\rangle(2+\alpha)(c+\kappa_1^2)(\kappa_1^2-(3+\alpha)c)}{ (\kappa_1^2+(1+\alpha)c)^2(c+(1+\alpha)\kappa_1^2)}e_1(\kappa_1)e_2(\kappa_1)\\
 e_2(\mu)&=\frac{\langle\Phi,\vec{a}\rangle(c-\kappa_1^2 )}{ \kappa_1 (\kappa_1^2+(1+\alpha)c)}e_{22}(\kappa_1)+\frac{\langle\Phi,\vec{a}\rangle(2\kappa_1^4-\alpha c^2-(6+\alpha)c\kappa_1^2)}{  \kappa_1^2(\kappa_1^2+(1+\alpha)c)^2}e_2(\kappa_1)^2.
 \end{split}
 \end{equation}
We work with  equations   \eqref{d3}. Using \eqref{b1} and \eqref{gm2}, we have
\begin{equation*}
\omega(e_1)=\frac{\kappa_1}{\kappa_1^2-c}e_2(\kappa_1),\quad \omega(e_2)=-\frac{c}{\kappa_1(\kappa_1^2-c)}e_1(\kappa_1).\\
\end{equation*}
Inserting this in \eqref{gm2}, we obtain
    \begin{equation}\label{d5}
    \left\{
\begin{split}
e_{1}(\gamma)&=\frac{\mu\kappa_1}{\kappa_1^2-c}e_2(\kappa_1)+\frac{\kappa_1^2+c}{\alpha }\langle\Phi,\vec{a}\rangle   \\
 e_{1}(\mu)&=-\frac{\gamma\kappa_1}{\kappa_1^2-c}e_2(\kappa_1)\\
 e_{2}(\mu)&=  \frac{c\gamma}{\kappa_1(\kappa_1^2-c)}e_1(\kappa_1)+\frac{\kappa_1^2+c}{\alpha\kappa_1^2}  c \langle\Phi,\vec{a}\rangle
      \end{split}\right.
   \end{equation}

From \eqref{d4} and \eqref{d5}, we get the expressions of $e_{11}(\kappa_1)$, $e_{12}(\kappa_1)$ and $e_{22}(\kappa_1)$. Using also the expressions \eqref{gm} of $\gamma$ and $\mu$, we have

\begin{equation}\label{d7}
\begin{split}
e_{11}(\kappa_1)&=\frac{(\alpha +2) k \left(\kappa_1^2-3 c\right)}{\left(\kappa_1^2-c\right) \left((\alpha +1) \kappa_1^2+c\right)}e_1(\kappa_1)^2+\frac{\kappa_1 \left((\alpha +1) \kappa_1^2+c\right)}{\left(\kappa_1^2-c\right) \left(\kappa_1^2+(1+\alpha)c\right)}e_2(\kappa_1)^2\\
&-\frac{\kappa_1 \left(k^2+c\right) \left((\alpha +1) \kappa_1^2+c\right)}{\alpha  \left(\kappa_1^2-c\right)},\\
e_{12}(\kappa_1)&=  \left(  \frac{2\kappa_1}{\kappa_1^2+(1+\alpha)c}+\frac{3\kappa_1}{c-\kappa_1^2} -\frac{2 c}{\kappa_1((\alpha +1) \kappa_1^2+c)}+\frac{2}{\kappa_1}\right) e_1(\kappa_1)e_2(\kappa_1),\\
e_{22}(\kappa_1)&=\frac{c \left(\kappa_1^2+(1+\alpha)c\right)}{\kappa_1 \left(\kappa_1^2-c\right) \left((\alpha +1) \kappa_1^2+c\right)}e_1(\kappa_1)^2+\frac{-2 \kappa_1^4+(\alpha +6) \kappa_1^2 c+\alpha  c^2}{\kappa_1 \left(c-\kappa_1^2\right) \left(\kappa_1^2+(1+\alpha)c\right)}e_2(\kappa_1)^2\\
&-\frac{c \left(\kappa_1^2+c\right) \left(\kappa_1^2+(1+\alpha)c\right)}{\alpha  \kappa_1 \left(\kappa_1^2-c\right)}.
\end{split}
\end{equation}
We distinguish the cases that $e_1(\kappa_1)=0$ identically and $e_1(\kappa_1)\not=0$.
\begin{enumerate}
\item Case  $e_1(\kappa_1)=0$ identically. Since $\kappa_1$ is not a constant function, then  $e_2(\kappa_1)\not=0$   in an open set of $\Omega$.     The first equation in \eqref{d7} is $e_{11}(\kappa_1)=0$. This gives 
$$\left(c+(1+\alpha)  \kappa_1^2\right) \left(\frac{e_2(\kappa_1)^2}{\kappa_1^2+(1+\alpha)c}-\frac{\kappa_1^2+c}{\alpha }\right)=0.$$
Thus
\begin{equation}\label{e22}
e_2(\kappa_1)^2=\frac{(\kappa_1^2+c)(\kappa_1^2+(1+\alpha)c)}{\alpha}.
\end{equation}
Differentiating with respect to $e_2$, 
$$2e_2(\kappa_1)e_{22}(\kappa_1)=\frac{4\kappa_1^3+2(2+\alpha)c\kappa_1}{\alpha}e_2(\kappa_1).$$
Simplifying by $e_2(\kappa_1)$, we have 
$$e_{22}(\kappa_1)=\frac{2\kappa_1^3+(2+\alpha)c\kappa_1}{\alpha}.$$
On the other hand, using \eqref{e22},  the last equation in \eqref{d7}  is
\begin{equation*}
\begin{split}
e_{22}(\kappa_1)&=\frac{-2 \kappa_1^4+(\alpha +6) \kappa_1^2 c+\alpha  c^2}{\kappa_1 \left(c-\kappa_1^2\right) \left(\kappa_1^2+(1+\alpha)c\right)}e_2(\kappa_1)^2-\frac{c \left(\kappa_1^2+c\right) \left(\kappa_1^2+(1+\alpha)c\right)}{\alpha  \kappa_1 \left(\kappa_1^2-c\right)}\\
&=\frac{\left(\kappa_1^2+c\right) \left(2 \kappa_1^4-(\alpha +7) \kappa_1^2 c-(2 \alpha +1) c^2\right)}{\alpha  \kappa_1 \left(\kappa_1^2-c\right)}
\end{split}
\end{equation*}
  Equating the two previous  expressions of $e_{22}(\kappa_1)$, we get 
  $$ (2 \alpha +5) \kappa_1^4+2 (\alpha +3) \kappa_1^2 c+(2 \alpha +1) c^2  =0.$$
   This is a polynomial on $\kappa_1$, whose coefficients are constant and not all zero.  This implies that $\kappa_1$ is constant, obtaining a contradiction.
 \item Case $e_1(\kappa_1)\not=0$.
 We substitute \eqref{d7} in the expression of $K$ in \eqref{bb2}, obtaining
\begin{equation}\label{pe1}
P_1e_1(\kappa_1)^2+Q_1e_2(\kappa_1)^2+R_1=0,
\end{equation}
where
\begin{equation*}
\begin{split}
P_1&=\frac{\alpha  \kappa_1^2+(\alpha +4) c}{(\alpha +1) \kappa_1^2+c},\\
Q_1&= \frac{(\alpha +4) \kappa_1^2+\alpha  c}{\kappa_1^2+(1+\alpha)c},\\
R_1&= \kappa_1^4+\frac{\left(\kappa_1^2+c\right)^2}{\alpha }+c^2.
\end{split}
\end{equation*}

We differentiate \eqref{pe1} with respect to $e_1$ obtaining
$$e_1(P_1)e_1(\kappa_1)^2+e_1(Q_1)e_2(\kappa_1)^2+e_1(R_1)+2P_1e_1(\kappa_1)e_{11}(\kappa_1)+2Q_1 e_2(\kappa_1)e_{12}(\kappa_1)=0.$$
Simplifying by $e_1(\kappa_1)$ because $e_1(\kappa_1)\not=0$, we obtain 
\begin{equation}\label{pe2}
P_2e_1(\kappa_1)^2+Q_2e_2(\kappa_1)^2+R_2=0,
\end{equation}
 where 
\begin{equation*}
\begin{split}
P_2&= \frac{2 (\alpha +2) \kappa_1 \left(\alpha  \kappa_1^4+(2-3 \alpha ) k^2 c-2 (\alpha +5) c^2\right)}{\left(\kappa_1^2-c\right) \left((\alpha +1) \kappa_1^2+c\right)^2},\\
Q_2&= \frac{2 (\alpha +2) \kappa_1 \left(2 (\alpha +1) \kappa_1^6+\left(\alpha ^2-3 \alpha -8\right) \kappa_1^4 c-\left(3 \alpha ^2+12 \alpha +10\right) \kappa_1^2 c^2-\alpha  (2 \alpha +3) c^3\right)}{\left(\kappa_1^2-c\right) \left(\kappa_1^2+(\alpha+1)c\right)^2 \left((\alpha +1) \kappa_1^2+c\right)} ,\\
R_2&=\frac{2 (\alpha +2) \kappa_1^5-8 (\alpha +1) \kappa_1^3 c-2 (\alpha +6) c\kappa_1^2}{\alpha  \left(\kappa_1^2-c\right)} .
\end{split}
\end{equation*}
Before the next computations, we explain how we will arrive to a contradiction.  From Eqs. \eqref{pe1} and \eqref{pe2} we get   the values of $e_1(\kappa_1)^2$ and $e_2(\kappa_1)^2$. Here we   need to discuss if the determinant of the coefficients of $e_1(\kappa_1)^2$ and $e_2(\kappa_1)^2$, namely, $P_1Q_2-P_2Q_1$, is $0$ or not. If $P_1Q_2-P_2Q_1=0$, then a suitable combination of \eqref{pe1} and \eqref{pe2} gives a linear combination of $R_1$ and $R_2$ which will be a polynomial on $\kappa_1$ with constant coefficients. This proves that $\kappa_1$ is constant, which it is a contradiction. If  $P_1Q_2-P_2Q_1\not= 0$, and with the values of $e_1(\kappa_1)^2$ and $e_2(\kappa_1)^2$, we differentiate $e_1(\kappa_1)^2$ with respect to $e_1$. This gives an identity involving $e_{11}(\kappa_1)$, $e_{12}(\kappa_1)$, $e_1(\kappa_1)^2$ and $e_2(\kappa_1)^2$. Substituting all these functions by their  expressions, we finally obtain a polynomial on $\kappa_1$ with constant coefficients, obtaining the desired   contradiction. 

 We have $P_1Q_2-P_2Q_1=(\alpha^2 -4)   \kappa_1 \left(\kappa_1^2-c\right)^3$. Notice that $\kappa_1^2-c\not=0$.
\begin{enumerate}
\item Case $\alpha^2-4=0$.   If $\alpha=-2$, the linear combinations $P_2\eqref{pe1}-P_1 \eqref{pe2}=0$ gives $8c\kappa_1=0$, which it is not possible. If $\alpha=2$, the same  combination of both equations yields $8 c\kappa_1 \left(-5 \kappa_1^4+6 \kappa_1^2 c+15 c^2\right)=0$, which implies that $\kappa_1$ is constant, obtaining a contradiction. 
  
 \item Case $\alpha^2-4\not=0$.  Solving \eqref{pe1} and \eqref{pe2}, we obtain 
\begin{equation}\label{es2}
\begin{split}
e_1(\kappa_1)^2&:=Z_1=-\frac{M_1\left((\alpha +1) \kappa_1^2+c\right)}{\alpha ^2 \left(\alpha ^2-4\right) \left(\kappa_1^2-c\right)^3},\\
e_2(\kappa_1)^2&:=Z_2=\frac{M_2 c \left(\kappa_1^2+(1+\alpha)c\right)^2}{\alpha ^2 \left(\alpha ^2-4\right) \left(\kappa_1^2-c\right)^3},
\end{split}
\end{equation}
where 
\begin{equation*}
\begin{split}
M_1&=    (\alpha +1) (\alpha^2 -4) \kappa_1^8-(\alpha  (\alpha  (4 \alpha +11)+16)+12) \kappa_1^6 c\\
&+(\alpha  (\alpha  (7 \alpha +13)+4)-12) \kappa_1^4 c^2+(\alpha  (\alpha  (4 \alpha  (\alpha +5)+39)+16)-4) \kappa_1^2 c^3\\
&+2 \alpha ^2 (\alpha +1) (\alpha +3) c^4,\\
M_2&=   -(\alpha  (5 \alpha +8)+4) \kappa_1^4+2 (\alpha  (\alpha  (2 \alpha +5)-4)-4) \kappa_1^2 c+(\alpha  (\alpha  (2 \alpha +15)+24)-4) c^2,
\end{split}
\end{equation*}
Differentiating $e_1(\kappa_1)^2$ with respect to $e_1$, we obtain 
$$2 e_1(\kappa_1)e_{11}(\kappa_1)=e_1(\kappa_1)\frac{d}{d\kappa_1}\left(Z_1\right).$$
  Simplifying by $e_1(\kappa_1)$ and using \eqref{d7} and \eqref{es2}, we obtain 
$$ \kappa_1^2 \left(\kappa_1^2+(1+\alpha)c\right) \left((\alpha +4) \kappa_1^2+\alpha  c\right)=0.$$
This implies that $\kappa_1$ is constant, which it is a contradiction. This completes the proof of Thm. \ref{t1}.
\end{enumerate}
 \end{enumerate}

\section{Proof of Theorem \ref{t22}}\label{s5}

It remains to prove the converse. Let $\Sigma$ be an $\alpha$-singular minimal surface with a constant principal curvature.    Without loss of generality, assume that  the principal curvature $\kappa_2$ is constant. Let $\kappa_2=c$. If $c=0$, then $K=0$ and Thm. \ref{t1} concludes that the surface is a plane or a cylindrical  singular minimal  surface. This proves Thm. \ref{t22} in this situation. 
 
 Suppose now $c\not=0$. If the principal curvature   $\kappa_1$ is constant, then the surface is isoparametric, hence it is a plane or a sphere. This proves Thm. \ref{t22} in this situation.
 
 Suppose that $\kappa_1$ is not constant and we will arrive to a contradiction. We have $K=c\kappa_1$ and $H=\kappa_1+c$. Identities \eqref{b1} gives $\omega(e_2)=0$. The expression of $K$ in \eqref{bb} is
 \begin{equation}\label{f1}
 c\kappa_1=\frac{e_{22}(\kappa_1)}{\kappa_1-c}-2\frac{e_2(\kappa_1)^2}{(\kappa_1-c)^2}. 
 \end{equation}
The last two equations of \eqref{d3} are
 \begin{equation*}
 \begin{split}
 0&=\frac{e_1(\kappa_1)\langle\Phi,\vec{a}\rangle+(\kappa_1+c)\gamma}{\alpha}+\gamma \kappa_1,\\
 0&=\frac{e_2(\kappa_1)\langle\Phi,\vec{a}+(\kappa_1+c)\mu}{\alpha}+\mu c.
 \end{split}
 \end{equation*}
 We obtain the expressions the functions $\gamma$ and $\mu$, namely,  
 $$\gamma=-\frac{e_1(\kappa_1)\langle\Phi,\vec{a}\rangle}{c+(1+\alpha)\kappa_1},\quad \mu=-\frac{e_2(\kappa_1)\langle\Phi,\vec{a}\rangle}{\kappa_1+(1+\alpha)c}.$$
  Differentiating $\mu$ with respect to $e_2$, we have
\begin{equation*}
e_2(\mu)=-\frac{\langle\Phi,\vec{a}\rangle}{\kappa_1+(1+\alpha)c}e_{22}(\kappa_1)+  \frac{2\langle\Phi,\vec{a}\rangle}{((1+\alpha)c+\kappa_1)^2}e_2(\kappa_1)^2.
\end{equation*}
On the other hand, the fourth equation of \eqref{d3} gives
$$e_2(\mu)=c\langle\Phi,\vec{a}\rangle \frac{c+\kappa_1}{\alpha},$$
because $\omega(e_2)=0$. From the last two expressions of $e_2(\mu)$ we obtain 
\begin{equation}\label{E1}
e_{22}(\kappa_1)=((\alpha+1)c+\kappa_1) \left(\frac{2 e_2(\kappa_1)^2}{((\alpha+1)c+\kappa_1)^2}-\frac{c (c+\kappa_1)}{\alpha }\right).
\end{equation}
 Inserting in \eqref{f1}, we have 
 $$\frac{(c-\kappa_1) \left(\alpha  \left(c^2+\kappa_1^2\right)+(c+\kappa_1)^2\right)}{\alpha }-\frac{2 (\alpha +2) e_2(\kappa_1)^2}{(\alpha+1)c+\kappa_1}=0.$$
 If $\alpha=-2$, then this equation is $(c+\kappa_1)^2-2 \left(c^2+\kappa_1^2\right)=0$, which implies that $\kappa_1$ is constant. If $\alpha\not=-2$, then 
\begin{equation}\label{E2}
e_2(\kappa_1)^2=\frac{(c-\kappa_1) ((\alpha+1)c+\kappa_1) \left((\alpha +1) c^2+2 c \kappa_1+(\alpha +1)\kappa_1^2\right)}{2 \alpha  (\alpha +2)}.
\end{equation}
If $e_2(\kappa_1)=0$ identically, then \eqref{E2} implies $(\alpha+1)c+\kappa_1=0$ or $(\alpha +1) c^2+2 c \kappa_1+(\alpha +1)\kappa_1^2=0$, hence $\kappa_1$ is constant. In both cases, we deduce that $\kappa_1$ is a constant, which it is a contradiction. Thus $e_2(\kappa_1)\not=0$.  Differentiate  with respect to $e_2$ in \eqref{E2}, and   simplify later  by $e_2(\kappa_1)$, we have 
 $$2 e_{22}(\kappa_1)= \frac{\left(-\alpha ^2+\alpha +2\right) c^3+2 (\alpha -1) \alpha  c^2 \kappa_1-3 \left(\alpha ^2+\alpha +2\right) c \kappa_1^2-4 (\alpha +1) \kappa_1^3}{2 \alpha  (\alpha +2)}.$$
Using the value of $e_{22}(\kappa_1)$ and $e_2(\kappa_1)^2$ in \eqref{E1} and \eqref{E2}, respectively, we have 
 $$-(\alpha -1) \kappa_1^2+2 (\alpha +1) c \kappa_1+(\alpha +1) c^2=0.$$
 This proves that $\kappa_1$ is constant, which it is a contradiction.
\section{Proof of Theorem \ref{t3}}\label{s6}

Since planes parallel to $\vec{a}$ and spheres centered at $\Pi$ are singular minimal surfaces with constant mean curvature, it remain to prove the converse. 

  Let $\Sigma$ be an $\alpha$-singular minimal surface with  constant mean curvature $H$.     Instead to use the previous computations of Sect. \ref{s2}, we do a direct proof. If a principal curvature is constant, then the other one is also constant because $H$ is constant. In such a case, the surface is isoparametric, hence   $\Sigma$ is a plane or a sphere. This proves the theorem in this situation. 
  
  From now, we suppose that both principal curvatures are not constant.   Let  $\Omega\subset\Sigma$ be an open set of $\Sigma$ formed by points $p$ where $ N(p)\not=\pm \vec{a}$. This open exists because $\Sigma$ is not a plane. We also assume that $\Omega$ is not formed by umbilic points because $\Sigma$ is not a plane neither a sphere. 
    The singular minimal surface equation \eqref{eq1} writes as 
$$H\langle\Phi(p),\vec{a}\rangle-\alpha\langle N(p),\vec{a}\rangle=0,\quad p\in \Omega.$$
Let $\{e_1,e_2\}$ be an orthonormal frame in $\Omega$ formed by principal directions, with $Se_i=\kappa_i e_i$. Differentiating the above identity  with respect to $e_1$ and $e_2$, we obtain 
$$(H+\alpha\kappa_1)\langle e_1,\vec{a}\rangle=0$$
$$(H+\alpha\kappa_2)\langle e_2,\vec{a}\rangle=0.$$
Since $N\not=\pm\vec{a}$ in $\Omega$, then $\langle e_1,\vec{a}\rangle\not=0$ or $\langle e_2,\vec{a}\rangle\not=0$ at some point   $p_0\in \Omega$. Without loss of generality, we suppose that $\langle e_1,\vec{a}\rangle\not=0$ at $p_0$. Then in an open set $\widetilde{\Omega}\subset\Omega$ around $p_0$ we have   $H+\alpha\kappa_1=0$. This proves that $\kappa_1$  is constant, which it is a contradiction.

\subsection*{Acknowledgment}
The author  has been partially supported by MINECO/MICINN/FEDER grant no. PID2023-150727NB-I00,  and by the ``Mar\'{\i}a de Maeztu'' Excellence Unit IMAG, reference CEX2020-001105- M, funded by MCINN/AEI/10.13039/ 501100011033/ CEX2020-001105-M.

\end{document}